\documentclass[12pt,a4paper]{article}
\usepackage[dvips]{graphicx,color} 
\usepackage[T1]{fontenc}  
\usepackage{times}     
\usepackage{enumerate,amsmath,amsthm,mathrsfs,citesort}
\usepackage{amsfonts}
\usepackage{amssymb}
\usepackage[a4paper,left=2.5cm,right=2cm,top=2.5cm,bottom=2.5cm]{geometry}
\usepackage{amstext} 
\usepackage{array}   
\usepackage[textsize=scriptsize,textwidth=4cm,linecolor=cyan, backgroundcolor=cyan]{todonotes}
\newtheoremstyle{bthm}{\baselineskip}{\baselineskip}{\slshape}{}{\bfseries}{}{  }{}
\newtheoremstyle{bex}{\baselineskip}{\baselineskip}{}{}{\itshape}{}{  }{}
\theoremstyle{bthm}
\newtheorem{theorem}{Theorem}[section]

\newtheorem{lemma}[theorem]{Lemma}
\newtheorem{proposition}[theorem]{Proposition}

\newtheorem{fact}[theorem]{Fact}

\newtheorem{conjecture}[theorem]{Conjecture}
\theoremstyle{bex}

\begin{document}
\begin{titlepage}
\title{ Linear $\chi$-binding functions for $\{P_3\cup P_2, gem\}$-free graphs}
\author{Athmakoori Prashant$^{1,}$\footnote{The author's research was supported by the Council of Scientific and Industrial Research,  Government of India, File No: 09/559(0133)/2019-EMR-I.}, S. Francis Raj$^{2}$ and M. Gokulnath$^{3}$}
\date{{\footnotesize $^{1,2}$ Department of Mathematics, Pondicherry University, Puducherry-605014, India.\\
$^{3}$Department of Mathematics, School of Advanced Sciences, VIT-AP University, Amaravati-522241, \\
\vskip-.2cm
Andhra Pradesh, India.}\\
{\footnotesize$^{1}$: 11994prashant@gmail.com\, $^{2}$: francisraj\_s@pondiuni.ac.in\, $^{3}$: gokulnath@vitap.ac.in\ }}
\maketitle
\renewcommand{\baselinestretch}{1.3}\normalsize
\begin{abstract}
Finding families that admit a linear $\chi$-binding function is a problem that has interested researchers for a long time. Recently, the question of finding linear subfamilies of $2K_2$-free graphs has garnered much attention. In this paper, we are interested in finding a linear subfamily of a specific superclass of $2K_2$-free graphs, namely $(P_3\cup P_2)$-free graphs. We show that the class of $\{P_3\cup P_2,gem\}$-free graphs admits $f=2\omega$ as a linear $\chi$-binding function. Furthermore, we give examples to show that the optimal $\chi$-binding function $f^*\geq \left\lceil\frac{5\omega(G)}{4}\right\rceil$ for the class of $\{P_3\cup P_2, gem\}$-free graphs and that the $\chi$-binding function $f=2\omega$ is tight when $\omega=2\textnormal{ and } 3$.
\end{abstract}
\section{Introduction}\label{intro}
All graphs considered in this paper are simple, finite and undirected.
Let $G$ be a graph with vertex set $V(G)$ and edge set $E(G)$.
Given a positive integer $n$, let $P_n,C_n$ and $ K_n$ respectively denote the path, the cycle and the complete graph on $n$ vertices.
If $G_1$ and $G_2$ are two vertex-disjoint graphs, then their \emph{union} $G_1\cup G_2$ is the graph with vertex set $V(G_1\cup G_2)=V(G_1)\cup V(G_2)$ and edge set $E(G_1\cup G_2)=E(G_1)\cup E(G_2)$.
Similarly, the \emph{join} of $G_1$ and $G_2$, denoted by $G_1+G_2$, is the graph whose vertex set $V(G_1+G_2) = V(G_1)\cup V(G_2)$ and the edge set $E(G_1+G_2) = E(G_1)\cup E(G_2)\cup\{xy: x\in V(G_1),\ y\in V(G_2)\}$. 
Let $\omega(G)$ and $\alpha(G)$ denote the \emph{clique number} and  \emph{independence number} of a graph $G$ respectively.
When there is no ambiguity, $\omega(G)$ will be denoted by $\omega$.
For $S,T\subseteq V(G)$, let $N_T(S)=N(S)\cap T$ (where $N(S)$ denotes the set of all neighbors of $S$ in $G$),
let $\langle S\rangle$ denote the subgraph induced by $S$ in $G$ and let $[S,T]$ denote the set of all edges with one end in $S$ and the other end in $T$.
If each vertex in $S$ is adjacent with every vertex in $T$, then $ [S, T ]$ is said to be complete.
For any graph $G$, let $\overline{G}$ denote the complement of $G$.
If $H$ is an induced subgraph of $G$, we shall denote it by $H\sqsubseteq G$.

For any positive integer $k$, a \emph{proper $k$-coloring} of a graph $G$ is a mapping $c$ : $V(G)\rightarrow\{1,2,\ldots,k\}$ such that $c(u)\neq c(v)$ whenever $uv\in E(G)$.
We say that $G$ is \emph{$k$-colorable} if it admits a proper $k$-coloring.
The \emph{chromatic number} of $G$, denoted by $\chi(G)=\min\{k: G \textnormal{ is } \linebreak k\textnormal{-colorable}\}$.

Let $\mathcal{F}$ be a family of graphs.
We say that $G$ is \emph{$\mathcal{F}$-free} if it does not contains any induced subgraph which is isomorphic to a graph in $\mathcal{F}$.
For a fixed graph $H$, let us denote the family of $H$-free graphs by $\mathcal{G}(H)$.
We say that a graph $G$ is \emph{perfect} if $\chi(H)=\omega(H)$ for every $H\sqsubseteq G$.
%
%
As a generalization of the concept of perfect graphs, A. Gy{\'a}rf{\'a}s in \cite{gyarfas1987problems}, introduced the concept of $\chi$-binding functions as follows.
A hereditary graph class $\mathcal{G}$ of graphs is said to be \emph{$\chi$-bounded} if there is a function $f$ (called a $\chi$-binding function) such that $\chi(G)\leq f(\omega(G))$, for every $G\in \mathcal{G}$.          
Let $f^*$ denote the \emph{optimal} $\chi$-binding function.       
We say that the $\chi$-binding function $f$ is \emph{special linear} if $f(x)= x+c$, where $c$ is a constant.
There has been extensive research done on $\chi$-binding functions for various graph classes.
See for instance, the survey articles in \cite{scott2020survey,schiermeyer2019polynomial}.
In \cite{gyarfas1987problems}, A. Gy{\'a}rf{\'a}s posed a lot of open problems and conjectures which still remain open.
The following was an interesting conjecture which was given by A. Gy{\'a}rf{\'a}s.

\begin{conjecture}[\cite{gyarfas1987problems}]
$\mathcal{G}(H)$ is $\chi$-bound for every fixed forest $H$.
\end{conjecture}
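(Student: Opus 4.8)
Since the statement is precisely the well-known conjecture of Gy\'arf\'as, which remains open for general forests, the realistic ``plan'' is to describe the route one would take and to pinpoint where it stalls. The first step is a reduction from forests to trees. Given a forest $H$ with components $T_1,\dots,T_c$, one builds a tree $T_H$ on a larger vertex set by linking the components through newly added vertices (for instance, joining a vertex of $T_i$ and a vertex of $T_{i+1}$ to a fresh common neighbor), arranged so that the original vertex set still induces exactly $H$. Then $H\sqsubseteq T_H$, and consequently every $H$-free graph is $T_H$-free: if some $G$ contained an induced $T_H$ it would contain an induced $H$ on the corresponding vertices. Hence $\mathcal{G}(H)\subseteq\mathcal{G}(T_H)$, and it suffices to prove the conjecture when the forbidden forest is a single tree.

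For a tree the two extreme shapes are already settled, and they indicate the two forces at play. When $T=P_t$ is a path, Gy\'arf\'as's layering argument applies: fix a vertex of a connected $P_t$-free graph $G$, split $V(G)$ into its breadth-first levels, note that induced paths running across consecutive levels are length-restricted, and color the union of even levels and the union of odd levels separately, recursing against a shorter forbidden path; this yields a bound of the form $\chi(G)\le(t-1)^{\omega(G)-1}$. When $T=K_{1,t}$ is a star, the hypothesis says exactly that $\alpha(\langle N(v)\rangle)\le t-1$ for every vertex $v$, so by Ramsey's theorem each neighborhood has size bounded in terms of $t$ and $\omega(G)$; thus $G$ has bounded degree, hence bounded degeneracy, and $\chi(G)$ is bounded in terms of $t$ and $\omega(G)$ alone. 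The program for a general tree $T$ is then to interpolate between these regimes by induction on the structure of $T$, e.g. on its radius (as Kierstead and Penrice do for radius two) or on its number of leaves.

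The engine for such an induction is the principle, developed in the Scott--Seymour line of work surveyed in \cite{scott2020survey}, that excessive chromatic number must manifest as local richness: if $\chi(G)$ is large while $\omega(G)$ is bounded, one locates a vertex $v$ for which $\langle N(v)\rangle$ still has large chromatic number, or more generally a ``heavy'' induced substructure, and recurses one level deeper into $T$. Iterating this root-to-leaf process is meant to grow a prescribed rooted copy of $T$ inside $G$, each new branch being planted in a part of $G$ that has been kept color-critical.

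The hard part, and the reason the conjecture is still open, is exactly the coexistence of depth and branching in one tree. Paths control depth and stars control branching, but the obvious recursion -- deleting the closed neighborhood of a branch vertex in order to attach the next subtree -- tends to annihilate the very large chromatic number one is exploiting, and no known decomposition simultaneously preserves large $\chi$ and the room to keep branching. Producing a structural quantity that is monotone along this recursion and forces either an induced $T$ or a bounded coloring is the crux. The strongly restricted family $\{P_3\cup P_2, gem\}$-free treated in this paper should be viewed as a tractable fragment in which these two obstructions can be disentangled and handled directly, rather than as progress toward the conjecture in full.
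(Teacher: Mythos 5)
You have correctly recognized that this statement is not a theorem of the paper at all: it is Gy\'arf\'as's conjecture, quoted from \cite{gyarfas1987problems} as motivation, and the paper neither proves it nor claims to. There is therefore no proof in the paper to compare against, and no proof should be expected from you. Your writeup is an accurate survey of the state of the art rather than a proof, and that is the right response. The technical content you do include is sound: the reduction from forests to trees is valid in the direction you need (if $H\sqsubseteq T_H$ with the original vertices inducing $H$, then $\mathcal{G}(H)\subseteq\mathcal{G}(T_H)$, so a binding function for the larger class restricts to the smaller), the layering argument for $P_t$-free graphs with the bound of the form $(t-1)^{\omega-1}$ is Gy\'arf\'as's, the Ramsey argument for stars is standard, and the attribution of the radius-two case to Kierstead and Penrice is correct. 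Your diagnosis of why the general case resists -- that deleting a closed neighborhood to plant the next branch of $T$ destroys the large chromatic number being exploited -- is also a fair summary of the known obstruction.

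The only caution is one of framing: since your text will sit where a proof would, it should not be mistaken for one. Everything after your first paragraph establishes partial results and heuristics, not the conjecture itself, and the conjecture remains open. Within the scope of this paper, the only role the conjecture plays is to motivate the study of $\chi$-binding functions for classes such as $\{P_3\cup P_2, gem\}$-free graphs, where (as you note) the path-type and star-type obstructions can be handled directly; the paper's actual contributions (Proposition \ref{gembound} and Theorem \ref{p2p3gem}) are logically independent of the conjecture.
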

One of the earlier works in this direction was done by S. Wagon in \cite{wagon1980bound}, where he showed that the class of $2K_2$-free graphs admits $\binom{\omega+1}{2}$ as a $\chi$-binding function.
Motivated by this, A. Gy{\'a}rf{\'a}s in \cite{gyarfas1987problems}, posed a problem which asks for the order of magnitude of the smallest $\chi$-binding function for $\mathcal{G}(2K_2)$ and recently, the best known upper bound for this class was given by M. Gei{\ss}er in \cite{maximillian2022}, as $\binom{\omega+1}{2}$- {\small{2}}$\lfloor\frac{\omega}{3}\rfloor$.
C. Brause et al. in \cite{brause2019chromatic}, proved that the class of $\{2K_2,H\}$-free graphs, where $H$ is a graph with $\alpha(H)\geq 3$, does not admit a linear $\chi$-binding function.
This raises a natural question of characterizing classes of $2K_2$-free graphs which admit linear $\chi$-binding functions.
In this direction, T. Karthick and S. Mishra in \cite{karthick2018chromatic}, proved that the families of $\{2K_2, H\}$-free graphs, where $H\in \{HVN, gem, K_1 + C_4, \overline{P_3 \cup P_2}, K_5-e, K_5\}$ admit special linear $\chi$-binding functions.
Some of these bounds were later improved by C. Brause et al. and A. Prashant et al. in \cite{brause2019chromatic} and \cite{dampk2} respectively.
There have been extensive studies to find linear subfamilies even for various superclasses of $2K_2$-free graphs.
See for instance \cite{maximillian2022,brause2019chromatic,prashantcaldam2023,bharathi2018colouring,caldam2022chromaticspringerversion,maximillian2022,schiermeyer2019polynomial}.
\begin{figure}
  \centering
  \includegraphics[width=7cm]{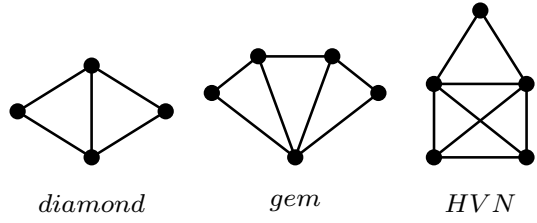}
  \caption{Some special graphs}\label{freegraphs}
\end{figure}
One of the superclass of $2K_2$-free graphs studied is the class of $(P_3\cup P_2)$-free graphs.
In \cite{bharathi2018colouring}, A. P. Bharathi and S. A. Choudum  gave a $\chi$-binding function of $O(\omega^3)$ for the class of $(P_3\cup P_2)$-free graphs and obtained a special linear $\chi$-binding function for $\{P_3\cup P_2,diamond\}$-free graphs.
A. Prashant et al. in \cite{caldam2022chromaticspringerversion}, have shown that $\{P_3\cup P_2, 2K_1+K_p\}$-free graphs and $\{P_3\cup P_2, (K_1\cup K_2)+K_p\}$-free graphs admit a linear $\chi$-binding function.
In \cite{choudum2010maximal}, S. A. Choudum et al. gave a decomposition theorem for $\{P_3\cup P_2,C_4\}$-free graphs and as a result showed that $f^*=\left\lceil\frac{5\omega}{4}\right\rceil$.
In \cite{wang2022chi}, X. Wang et al. showed that the class of $\{P_3\cup P_2, K_1+C_4\}$- free graphs admits $3\omega$ as a $\chi$-binding function.
Recently, T. Karthick and A. Char in \cite {char2022optimal}, showed that $\max\{\omega+3,\lfloor\frac{3\omega}{2}\rfloor-1\}$ is the optimal $\chi$-binding function for the class of $\{P_3\cup P_2,\overline{P_3\cup P_2}\}$-free graphs.

Moving in this direction, in this paper we show that the class of $\{P_3\cup P_2, gem\}$-free graphs is $\chi$-bounded by $2\omega$.
In addition, we give an example to establish that, unlike its subclass $\{2K_2, gem\}$-free graphs, which admits a special linear $\chi$-binding function (as shown by C. Brause in \cite{brause2019chromatic}),  the optimal $\chi$-binding function $f^*$ for $\{P_3\cup P_2, gem\}$-free graphs is at least  $\left\lceil\frac{5\omega}{4}\right\rceil$.
We also give examples to show that the $\chi$-binding function $f=2\omega$ is tight when $\omega=2 \textnormal{ and }3$.

Some of the graphs that are considered as forbidden induced subgraphs in this paper are given in Figure \ref{freegraphs}.
Notations and terminologies not mentioned here are as in \cite{west2005introduction}.

\section{Preliminaries}

Throughout this paper, we use a particular partition of the vertex set of a graph $G$ which was initially defined by S. Wagon in \cite{wagon1980bound} and later improved by A. P. Bharathi and S. A. Choudum  in \cite{bharathi2018colouring} as follows.
Let $A=\{v_1,v_2,\ldots,v_{\omega}\}$ be a maximum clique of $G$.
The lexicographic ordering on the set $L=\{(i, j): 1 \leq i < j \leq \omega\}$ is defined in the following way.
For two distinct elements $(i_1,j_1),(i_2,j_2)\in L$, we say that $(i_1,j_1)$ precedes $(i_2,j_2)$, denoted by $(i_1,j_1)<_L(i_2,j_2)$ if either $i_1<i_2$ or $i_1=i_2$ and $j_1<j_2$.
For every $(i,j)\in L$, let $C_{i,j}=\{v\in V(G)\backslash A:v\notin N(v_i)$ and \linebreak $\ v\notin N(v_j)\}\backslash\{\mathop\cup\limits_{(i',j')<_L(i,j)} C_{i',j'}\}$.
That is, for $(i,j)\in L$, $C_{i,j}$ consists of all the vertices $v$ in $V(G)\backslash A$ such that $i$ and $j$ are the least distinct positive integers for which $vv_i$ and $vv_j$ are not adjacent in $G$.
Clearly $C_{i,j}\cap C_{i',j'}=\emptyset$ whenever $(i,j)\neq (i',j')$.

For $1\leq k\leq \omega$, let $I_k=\{v\in V(G)\backslash A: v\in N(v_i), \text{\ for\ every}\ i\in\{1,2,\ldots,\omega\}\backslash\{k\}\} $.
Since $A$ is a maximum clique, for $1\leq k\leq \omega$, $I_k$ is an independent set and for any $x\in I_k$, $xv_k\notin E(G)$.
Also for any $(i,j)\in L$ and $k\in\{1,2,\ldots,\omega\}$, from the definitions of $C_{i,j}$ and $I_k$, we see that $I_k\cap C_{i,j}=\emptyset$.
Clearly, each vertex in $V(G)\backslash A$ is non-adjacent to at least one vertex in $A$. 
Hence those vertices will be contained either in $I_k$ for some $k\in\{1,2,\ldots,\omega\}$, or in $C_{i,j}$ for some $(i,j)\in L$.
Thus $V(G)=A\cup(\mathop\cup\limits_{k=1}^{\omega}I_k)\cup(\mathop\cup\limits_{(i,j)\in L}C_{i,j})$ and throughout the paper, we shall use this partition for $V(G)$.
Furthermore, we define two new sets $C'_{i,j}$ and $D_{i,j}$, where $C_{i,j}'$ denote the maximal subset of $C_{i,j}$ such that $\langle C'_{i,j}\rangle$ contains no isolated vertices and $D_{i,j}= A\backslash N_A(C_{i,j}')$.

Without much difficulty, one can make the following observations on $\left(P_3\cup P_2\right)$-free graphs.
\begin{fact}\label{true} Let $G$ be a  $\left(P_3\cup P_2\right)$-free graph. For $(i,j)\in L$,
the following holds.
\begin{enumerate}[(i)]
\setlength\itemsep{-1pt}
\item \label{CijClique} $\langle C_{i,j}\rangle$ is $P_3$-free and hence a disjoint union of cliques and $\chi(\langle C_{i,j}\rangle)=\omega(\langle C_{i,j}\rangle)$.
\item \label{joinvertex} If $a\in C_{i,j}$, then $N_A(a)\supseteq \{v_1,v_2,\ldots,v_j\}\backslash\{v_i,v_j\}$.
\end{enumerate}
\end{fact}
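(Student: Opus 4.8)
The plan is to treat the two parts separately, with part (\ref{CijClique}) being the substantive one and part (\ref{joinvertex}) an immediate unwinding of the definition of $C_{i,j}$.

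For part (\ref{CijClique}), I would argue by contradiction. Suppose $\langle C_{i,j}\rangle$ is not $P_3$-free, so it contains an induced path on three vertices $a,b,c$ (that is, $ab,bc\in E(G)$ but $ac\notin E(G)$). The key observation is that every vertex of $C_{i,j}$ is, by construction, non-adjacent to both $v_i$ and $v_j$, while $v_iv_j\in E(G)$ because $A$ is a clique. Hence $\{v_i,v_j\}$ induces a $P_2$ that sends no edge to any of $a,b,c$. Therefore $\langle\{a,b,c,v_i,v_j\}\rangle\cong P_3\cup P_2$ is an induced subgraph of $G$, contradicting the hypothesis that $G$ is $(P_3\cup P_2)$-free. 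This shows $\langle C_{i,j}\rangle$ is $P_3$-free. I would then invoke the standard fact that in a $P_3$-free graph every connected component is a clique, so $\langle C_{i,j}\rangle$ is a disjoint union of cliques; for such a graph one colors each component optimally, and both $\chi(\langle C_{i,j}\rangle)$ and $\omega(\langle C_{i,j}\rangle)$ equal the order of a largest component, giving $\chi(\langle C_{i,j}\rangle)=\omega(\langle C_{i,j}\rangle)$.

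For part (\ref{joinvertex}), I would simply unpack what membership in $C_{i,j}$ means: for $a\in C_{i,j}$, the indices $i<j$ are the two least positive integers for which $a$ fails to be adjacent to $v_i$ and $v_j$. By minimality of $i$, every $v_k$ with $k<i$ satisfies $av_k\in E(G)$; and since $j$ is the second smallest such index, every $v_k$ with $i<k<j$ also satisfies $av_k\in E(G)$. Collecting these, $a$ is adjacent to all of $v_1,\ldots,v_{j-1}$ except $v_i$, which is precisely the claim $N_A(a)\supseteq\{v_1,\ldots,v_j\}\backslash\{v_i,v_j\}$.

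I do not expect a serious obstacle here, as both arguments are short. The only point requiring care is verifying that the five chosen vertices genuinely induce $P_3\cup P_2$ rather than a denser graph; specifically, that there are no edges between $\{v_i,v_j\}$ and $\{a,b,c\}$. This is exactly guaranteed by the defining non-adjacency property of $C_{i,j}$, so once one selects $v_i,v_j$ as the disjoint edge the check is immediate.
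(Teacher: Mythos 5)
Your proof is correct and follows the same route as the paper: for (i) the paper likewise takes an induced $P_3$ in $\langle C_{i,j}\rangle$ together with $v_i,v_j$ to produce an induced $P_3\cup P_2$, and for (ii) it simply cites the definitions of $C_{i,j}$ and $A$. Your version just spells out the details (the non-adjacency of $C_{i,j}$ to $\{v_i,v_j\}$ and the minimality of the pair $(i,j)$) more explicitly than the paper does.
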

\begin{proof}
\begin{enumerate}[(i)]\setlength\itemsep{-1pt}
\item  Suppose a $\langle C_{i,j}\rangle$ contains a $P_3$ say $P$, then $\langle \{V(P),v_i,v_j\}\rangle\cong P_3\cup P_2$, contradiction.
\item Follows immediately from the definitions of $C_{i,j}$ and  $A$.
\vskip-.6cm
\end{enumerate}
\end{proof}

\section{$\{P_3\cup P_2, gem\}$-free graphs}

Let us start Section 3 by establishing $(3\omega-2)$ as a linear $\chi$-binding function for $\{P_3\cup P_2, gem\}$-free graphs.

\begin{proposition}\label{gembound}
Let $G$ be a $\{P_3\cup P_2, gem\}$-free graph, then $\chi(G)\leq 3\omega-2$.
\end{proposition}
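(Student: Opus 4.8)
The plan is to exploit a convenient reformulation of gem-freeness. Since the gem is $K_1+P_4$, a graph $G$ is gem-free if and only if for every vertex $v$ the induced subgraph $\langle N(v)\rangle$ is $P_4$-free: an induced $P_4$ inside $N(v)$, together with $v$ (which is adjacent to all four of its vertices), would span a gem. As $P_4$-free graphs are perfect, $\chi(\langle N(v)\rangle)=\omega(\langle N(v)\rangle)$, and since any clique in $N(v)$ extends to a larger clique of $G$ by adding $v$, we get $\omega(\langle N(v)\rangle)\le\omega-1$. Hence every neighbourhood $\langle N(v)\rangle$ is properly colourable with at most $\omega-1$ colours. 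This local use of the forbidden gem, rather than the global $C_{i,j}/I_k$ partition, is the key idea.

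First I would dispose of the trivial case $\omega=1$ (then $G$ is edgeless and $\chi(G)=1=3\omega-2$) and assume $\omega\ge2$, so that the maximum clique $A$ contains two vertices $v_1,v_2$. The heart of the argument is the observation that these two neighbourhoods, together with a single cell $C_{1,2}$ of the partition introduced above, already cover the whole vertex set: I claim that $V(G)=N(v_1)\cup N(v_2)\cup C_{1,2}$. To verify this, note that $v_1\in N(v_2)$ and $v_2\in N(v_1)$ because $v_1v_2\in E(G)$; every other vertex of $A$ lies in $N(v_1)$; and a vertex of $V(G)\setminus A$ adjacent to neither $v_1$ nor $v_2$ is, since $(1,2)$ is the least element of $L$, precisely a vertex of $C_{1,2}$.

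With the covering in hand I would colour the three pieces using three pairwise disjoint palettes $P_1,P_2,P_3$. I colour $\langle N(v_1)\rangle$ with the $\omega-1$ colours of $P_1$, and the induced subgraph $\langle N(v_2)\setminus N(v_1)\rangle$ of the cograph $\langle N(v_2)\rangle$ with the $\omega-1$ colours of $P_2$. By part (i) of Fact~\ref{true}, the set $C_{1,2}$ induces a disjoint union of cliques, so $\langle C_{1,2}\rangle$ is $\omega$-colourable, and I colour it with the $\omega$ colours of $P_3$. Each piece is coloured properly on its own; and because the three palettes are pairwise disjoint, any edge joining two different pieces is automatically bichromatic. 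This produces a proper colouring of $G$ using $(\omega-1)+(\omega-1)+\omega=3\omega-2$ colours, as required.

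I expect no serious obstacle in the estimates themselves. The one point worth flagging is that I deliberately bound $\chi(\langle C_{1,2}\rangle)$ only by $\omega$: its clique number can genuinely reach $\omega$ (already when $\omega=2$, where $C_{1,2}$ may contain an edge forming a $2K_2$ with $v_1v_2$), so no saving is available there, and it is exactly the budget $3\omega-2$ that accommodates this. Thus the only real ``difficulty'' is conceptual, namely spotting the covering $V(G)=N(v_1)\cup N(v_2)\cup C_{1,2}$; once that is in place, the cograph bound and the disjoint-palette colouring make the conclusion essentially automatic.
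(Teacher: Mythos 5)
Your proof is correct and is essentially the paper's own argument in different notation: the paper's two sets $\bigl(\mathop\cup\limits_{k= 2}^{\omega}(v_k\cup I_k)\bigr)\cup(\mathop\cup\limits_{i\geq 2}C_{i,j})$ and $\{v_1\}\cup I_1\cup(\mathop\cup\limits_{j\geq 3}C_{1,j})$ are exactly $N(v_1)$ and $N(v_2)\setminus N(v_1)$, and both proofs then use gem-freeness to make each of these a $P_4$-free (hence perfect) graph of clique number at most $\omega-1$ and finish with the $\omega$ colours for the union of cliques $C_{1,2}$. Your neighbourhood-covering phrasing is a clean restatement, not a different route.
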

\begin{proof}
Let $G$ be a $\{P_3\cup P_2, gem\}$-free graph.
It is easy to see that $[v_1,\{\mathop\cup\limits_{k=2}^{\omega}(v_i\cup I_k)\cup(\mathop\cup\limits_{i\geq 2}C_{i,j})\}]$ and $[v_2,\{(\mathop\cup\limits_{j\geq 3}C_{1,j})\cup v_1\cup I_1\}]$ are complete and $\omega(\langle\{\mathop\cup\limits_{k=2}^{\omega}(v_i\cup I_k)\cup(\mathop\cup\limits_{i\geq 2}C_{i,j})\}\rangle)=\omega-1$.
As a consequence, $\omega(\langle\{(\mathop\cup\limits_{j\geq 3}C_{1,j})\cup v_1\cup I_1\}\rangle)\leq \omega-1$ and since $G$ is $gem$-free, $\langle\{\mathop\cup\limits_{k=2}^{\omega}(v_i\cup I_k)\cup(\mathop\cup\limits_{i\geq 2}C_{i,j})\}\rangle$ and $\langle\{(\mathop\cup\limits_{j\geq 3}C_{1,j})\cup v_1\cup I_1\}\rangle$ are $P_4$-free and hence can be colored with $(\omega-1)$ colors each.
Furthermore, by (\ref{CijClique}) of Fact \ref{true}, $\chi(\langle C_{1,2}\rangle)\leq \omega(G)$.
Thus $\chi(G)\leq \chi(\langle V(G)\backslash C_{1,2}\rangle)+\chi(\langle C_{1,2}\rangle)\leq 2(\omega-1)+\omega=3\omega-2$ colors.
\end{proof}
Although without much difficulty, we have shown that the class of $\{P_3\cup P_2,gem\}$-free graphs is linearly $\chi$-bounded by $f=3\omega-2$, we shall show that this bound is not optimal.
Moving in this direction, we begin by recalling some properties about $gem$-free graphs in \cite{dampk2}.
 \begin{lemma}\cite{dampk2}\label{gemlemma1}
Let $G$ be a $gem$-free graph and $V(G)=A\cup(\mathop\cup\limits_{i=1}^{\omega}I_i)\cup(\mathop\cup\limits_{(i,j)\in L}C_{i,j})$.
For $i,j\in\{1,2,\ldots,\omega(G)\}$ such that $i<j$ and $j\geq 3$ the following holds.

\begin{enumerate}[(i)]

\item\label{Gperfect1} $\langle C_{i,j}\rangle$ is $P_4$-free and hence perfect.

\item\label{Gadjacency1} For any $l\in\{1,2,\ldots,\omega\}$, if $H$ is a component in $\langle C_{i,j}\rangle$ and $a\in V(H)$ such that $av_l\in E(G)$, then $[V(H),v_l]$ is complete.

\item\label{GIp} For $a\in C_{i,j}$ if $av_l\notin E(G)$, for some $l\in\{1,2,\ldots,\omega\}$, then $[a,I_l]=\emptyset$.

\item\label{Gclique1} If $H$ is a component of $C_{i,j}$, then $\omega(H)\leq |A\backslash N_A (V(H))|$.
\end{enumerate}
\end{lemma}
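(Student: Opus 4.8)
The plan is to exploit a single ``universal apex'' inside the maximum clique $A$. Fix any index $t\in\{1,2,\ldots,j\}\setminus\{i,j\}$; this set is nonempty precisely because $j\geq 3$. By Fact~\ref{true}(\ref{joinvertex}), every vertex of $C_{i,j}$ is adjacent to $v_t$, so $[C_{i,j},v_t]$ is complete, and $v_t$ will serve as the dominating vertex of a $gem$ throughout. For (\ref{Gperfect1}) I would argue by contradiction: if $\langle C_{i,j}\rangle$ contained an induced $P_4$ on a vertex set $W$, then since $v_t$ is complete to $W$ and $v_t\notin C_{i,j}$, we would get $\langle W\cup\{v_t\}\rangle\cong gem$, which is impossible. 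Hence $\langle C_{i,j}\rangle$ is $P_4$-free, and as $P_4$-free graphs (cographs) are perfect, so is $\langle C_{i,j}\rangle$.

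For (\ref{Gadjacency1}) I first note that the hypothesis $av_l\in E(G)$ forces $l\notin\{i,j\}$ (vertices of $C_{i,j}$ are non-adjacent to $v_i$ and $v_j$), and I may assume $l\neq t$, since otherwise $v_l=v_t$ is already complete to all of $C_{i,j}$. Suppose for contradiction that $v_l$ has both a neighbor and a non-neighbor in the connected graph $H$; then the partition of $V(H)$ into $v_l$-neighbors and $v_l$-non-neighbors has an edge across it, yielding $s,t'\in V(H)$ with $st'\in E(G)$, $sv_l\in E(G)$, $t'v_l\notin E(G)$. Here lies the one delicate point of the lemma: the four vertices $\{v_l,s,t',v_t\}$ only span a diamond, not a $gem$, so I must enlarge the configuration by importing a clique vertex non-adjacent to all of $C_{i,j}$. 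Taking $v_i$ (which is $\neq v_l$), I claim $\langle\{t',s,v_l,v_i\}\rangle$ is an induced $P_4$ with edges $t's,\ sv_l,\ v_lv_i$: the remaining pairs $t'v_l$, $t'v_i$, $sv_i$ are non-edges, the latter two because $s,t'\in C_{i,j}$ are non-adjacent to $v_i$. Since $v_t$ is complete to $\{s,t'\}\subseteq C_{i,j}$ and to $\{v_l,v_i\}\subseteq A$ (as $t\notin\{i,l\}$), the set $\{v_t,t',s,v_l,v_i\}$ induces a $gem$, a contradiction; hence $[V(H),v_l]$ is complete.

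Part (\ref{GIp}) runs along the same lines. Assuming $x\in I_l$ with $ax\in E(G)$, I record that $l\neq t$ (because $a$ is adjacent to every $v_m$ with $m\in\{1,\ldots,j\}\setminus\{i,j\}$, while $av_l\notin E(G)$), and I choose $v_p\in\{v_i,v_j\}\setminus\{v_l\}$, which is nonempty since $\{i,j\}$ has two elements. Then $\langle\{v_l,v_p,x,a\}\rangle$ is an induced $P_4$ with edges $v_lv_p,\ v_px,\ xa$: indeed $x\in I_l$ is adjacent to every clique vertex except $v_l$ and $p\neq l$, while $v_lx$, $v_la$, $v_pa$ are non-edges ($a$ being non-adjacent to $v_l$ and to $v_i,v_j$). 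With apex $v_t$, which is complete to all four vertices (using $t\notin\{l,p\}$ and $a\in C_{i,j}$), the set $\{v_t,v_l,v_p,x,a\}$ is again a $gem$, contradicting $gem$-freeness; hence $[a,I_l]=\emptyset$.

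Finally, (\ref{Gclique1}) is a short consequence of (\ref{Gadjacency1}). By (\ref{Gadjacency1}) each clique vertex is either complete or anti-complete to $V(H)$, so $B:=N_A(V(H))$ is complete to $V(H)$. Taking a maximum clique $Q$ of $H$, the set $Q\cup B$ induces a clique of $G$, whence $\omega(H)+|B|=|Q|+|B|\leq\omega(G)=|A|$, and rearranging gives $\omega(H)\leq|A|-|N_A(V(H))|=|A\setminus N_A(V(H))|$. I expect step (\ref{Gadjacency1}) to be the crux, since it is the only place where a naive four-vertex attempt collapses into a diamond and one is forced to adjoin a non-neighbor $v_p\in\{v_i,v_j\}$ in order to manufacture the induced $P_4$ that the $gem$ demands.
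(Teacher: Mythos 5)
Your argument is correct and complete: the universal apex $v_t\in\{v_1,\dots,v_j\}\setminus\{v_i,v_j\}$ (which exists since $j\ge 3$ and is complete to $C_{i,j}$ by Fact~\ref{true}(\ref{joinvertex})) correctly supplies the dominating vertex of the $gem$ in each part, and importing $v_i$ (resp.\ $v_p\in\{v_i,v_j\}\setminus\{v_l\}$) to stretch the diamond into an induced $P_4$ is exactly the step needed. Note that the paper itself states Lemma~\ref{gemlemma1} as a citation from \cite{dampk2} and gives no proof here, so there is nothing in this document to compare against; your write-up is a valid self-contained proof of the cited lemma.
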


Using the properties given in Lemma \ref{gemlemma1}, we observe the following properties about $\{P_3\cup P_2,gem\}$-free graphs.

\begin{lemma}\label{gemlemma}
Let $G$ be a $\{P_3\cup P_2, gem\}$-free graph with $\omega(G)\geq 3$.
Let $i,j,k,\ell\in\{1,2,\ldots,\omega(G)\}$ and $j\geq 3$.
If $a\in C_{i,j}$, then the following are true.
\begin{enumerate}[(i)]
\setlength\itemsep{-1pt}

\item\label{Gadjacency} If $av_{\ell}\in E(G)$, then $[C_{i,j}',v_{\ell}]$ is complete. As a consequence, if $b\in C_{i,j}'$ and $bv_{\ell}\notin E(G)$, then $[C_{i,j},v_{\ell}]=\emptyset$.
	

\item\label{Gclique} $\omega(\langle C_{i,j}'\rangle)\leq |A\backslash N_A (C_{i,j}'))|$.

\item\label{cijcil} For $j,\ell\geq 3$, $[C_{i,j},C_{i,\ell}]=[C_{i,j},C_{j,\ell}]=\emptyset$. Furthermore, if $j\geq 4$, then $[C_{i,j}, C_{k,j}]=\emptyset$, for every $k\neq i$.

\item\label{edgerow} If $C'_{i,j}\neq\emptyset$, then for $j<\ell$, $C_{i,\ell}=C_{j,\ell}=\emptyset$ and for $\ell<j$, $C'_{i,\ell}=\emptyset$. Furthermore, if $j\geq 4$, then $C_{k,j}=\emptyset$, for every $k\neq i$.

\end{enumerate}
\end{lemma}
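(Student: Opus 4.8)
The plan is to extract from $(P_3\cup P_2)$-freeness and $gem$-freeness via two gadgets, assembled in each case from a putative offending edge together with clique vertices $v_i,v_j$ and their forced neighbours from Fact~\ref{true}(\ref{joinvertex}): an induced $P_3\cup P_2$ (for $(P_3\cup P_2)$-freeness) and an induced $gem=K_1+P_4$ (for $gem$-freeness). Part~(\ref{Gadjacency}) is the engine and I would prove it first. Fix $a\in C_{i,j}$ with $av_\ell\in E(G)$; since every vertex of $C_{i,j}$ misses $v_i,v_j$, necessarily $\ell\notin\{i,j\}$, so $v_\ell v_i\in E(G)$ and $\langle\{a,v_\ell,v_i\}\rangle$ is an induced $P_3$ (namely $a-v_\ell-v_i$, using $av_i\notin E(G)$). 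Let $H$ be any nontrivial component of $\langle C_{i,j}\rangle$, i.e.\ a clique carrying an edge $b_1b_2$, so $V(H)\subseteq C'_{i,j}$. If $a\in V(H)$ then Lemma~\ref{gemlemma1}(\ref{Gadjacency1}) makes $[V(H),v_\ell]$ complete; otherwise $ab_1,ab_2\notin E(G)$, and as $b_1,b_2$ miss $v_i$, the failure $v_\ell b_1,v_\ell b_2\notin E(G)$ would make $\langle\{a,v_\ell,v_i,b_1,b_2\}\rangle\cong P_3\cup P_2$, a contradiction; hence $v_\ell$ meets $H$ and Lemma~\ref{gemlemma1}(\ref{Gadjacency1}) again gives completeness. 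Ranging over all nontrivial $H$ yields $[C'_{i,j},v_\ell]$ complete, and the stated consequence is immediate.

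Part~(\ref{Gclique}) then drops out: by~(\ref{Gadjacency}) any $v_\ell$ adjacent to one vertex of $C'_{i,j}$ is adjacent to all of it, so every nontrivial component $H$ has $N_A(V(H))=N_A(C'_{i,j})$; applying Lemma~\ref{gemlemma1}(\ref{Gclique1}) to a maximum such $H$ gives $\omega(\langle C'_{i,j}\rangle)=\omega(H)\le |A\setminus N_A(V(H))|=|A\setminus N_A(C'_{i,j})|$.

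For part~(\ref{cijcil}) I would argue by contradiction, manufacturing a $gem$. Suppose $a\in C_{i,j}$, $a'\in C_{i,\ell}$ (say $j<\ell$) with $aa'\in E(G)$. Using Fact~\ref{true}(\ref{joinvertex}) one verifies that the only edges inside $\{v_i,v_j,a',a\}$ are $v_iv_j,\,v_ja',\,a'a$, so $v_i-v_j-a'-a$ is an induced $P_4$, and that every $v_m$ with $m\in\{1,\dots,j-1\}\setminus\{i\}$ is adjacent to all four; such $m$ exists because $j\ge3$, giving a $gem$. The case $a'\in C_{j,\ell}$ is symmetric, with induced $P_4$ equal to $a-a'-v_i-v_j$ and the same apex. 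The delicate piece is the ``furthermore'' $[C_{i,j},C_{k,j}]=\emptyset$: with $a'\in C_{k,j}$ the four vertices $\{a,a',v_i,v_k\}$ induce a $C_4$, not a $P_4$, so I would instead bring in $v_j$ and read off the induced $P_4$ as $a-a'-v_i-v_j$, whose apex $v_m$ must now avoid both $i$ and $k$; a suitable $m\in\{1,\dots,j-1\}\setminus\{i,k\}$ exists exactly when $j\ge4$, matching the hypothesis. I expect this $C_4$-versus-$P_4$ point, together with the bookkeeping that pins the thresholds to $j\ge3$ and $j\ge4$, to be the main obstacle.

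Finally, part~(\ref{edgerow}) bootstraps on~(\ref{cijcil}). Assume $C'_{i,j}\neq\emptyset$ and fix an edge $b_1b_2$ of $\langle C_{i,j}\rangle$. For $\ell>j$ and $a'\in C_{i,\ell}$, part~(\ref{cijcil}) gives $a'b_1,a'b_2\notin E(G)$, while $a'-v_j-v_i$ is an induced $P_3$ (as $a'$ meets $v_j$ but not $v_i$); since $v_i,v_j$ miss $b_1,b_2$, the set $\{a',v_j,v_i,b_1,b_2\}$ induces $P_3\cup P_2$ unless $C_{i,\ell}=\emptyset$. The claims $C_{j,\ell}=\emptyset$ ($\ell>j$) and $C_{k,j}=\emptyset$ ($j\ge4$, $k\neq i$) run identically, using the matching clause of~(\ref{cijcil}) for the non-adjacency and the induced $P_3$ $\,a'-v_i-v_j$. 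For $\ell<j$ with a putative edge $c_1c_2$ of $\langle C_{i,\ell}\rangle$, the non-adjacency to $b_1,b_2$ again comes from~(\ref{cijcil}), and I would use the induced $P_3$ $\,b_1-v_\ell-v_i$ (valid since $b_1$ meets $v_\ell$ but not $v_i$) to get $\{b_1,v_\ell,v_i,c_1,c_2\}\cong P_3\cup P_2$. The one point needing separate care is small second indices such as $C_{1,2}$, where Fact~\ref{true}(\ref{joinvertex}) supplies no adjacencies and~(\ref{cijcil}) does not apply; I would handle these low-index cases directly or note that they fall outside the ambient index hypotheses.
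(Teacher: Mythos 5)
Your proposal is correct and follows essentially the same route as the paper: part (\ref{Gadjacency}) via the $\{a,v_\ell,v_\cdot,b_1,b_2\}$ copy of $P_3\cup P_2$ combined with Lemma \ref{gemlemma1}(\ref{Gadjacency1}), part (\ref{Gclique}) from (\ref{Gadjacency}) and Lemma \ref{gemlemma1}(\ref{Gclique1}), part (\ref{cijcil}) by building a $gem$ on an induced $P_4$ through $v_i,v_j$ with an apex supplied by Fact \ref{true}(\ref{joinvertex}), and part (\ref{edgerow}) by combining (\ref{cijcil}) with an induced $P_3$ through the clique to force a $P_3\cup P_2$. Your explicit accounting of where $j\geq 3$ versus $j\geq 4$ is needed for the apex, and your flagging of the $\ell=2$ case in (\ref{edgerow}) (which the paper glosses over with ``similar arguments''), are if anything more careful than the published proof.
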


\begin{proof}
Let $G$ be a $\{P_3\cup P_2, gem\}$-free graph with $\omega(G)\geq 3$.
\begin{enumerate}[(i)]
\setlength\itemsep{-1pt}


\item If $av_{\ell}\in E(G)$, then clearly $\ell\neq\{i,j\}$.
If $\ell<j$, then by (\ref{joinvertex}) of Fact \ref{true},  $[C_{i,j},v_{\ell}]$ is complete and we are done.
Hence, let us assume $\ell>j$.
Suppose $[C'_{i,j},v_{\ell}]$ is not complete, then there exists a vertex $b\in C_{i,j}'$ such that $bv_{\ell}\notin E(G)$.
By (\ref{Gadjacency1}) of Lemma \ref{gemlemma1} and definition of $C_{i,j}'$, there exists another vertex $c\in C_{i,j}'$ such that $bc\in E(G)$ and $ab, ac, cv_{\ell}\notin E(G)$.
Hence $\langle \{a,v_{\ell},v_j,b,c\}\rangle\cong P_3\cup P_2$, a contradiction.
Thus $[C'_{i,j},v_{\ell}]$ is complete.



\item Follows immediately from (\ref{Gclique1}) of Lemma \ref{gemlemma1} and (\ref{Gadjacency}) of Lemma \ref{gemlemma}.

\item Let $a\in C_{i,j}$ and $b\in C_{i,\ell}\cup C_{j,\ell}$ such that $ab\in E(G)$.
Without loss of generality, let us assume $j<\ell$, then either $bv_j\ \text{or}\ bv_i$ is an edge in $G$. By (\ref{joinvertex}) of Fact \ref{true}, there exists a vertex $v_s\in A$ such that $\langle\{v_s, a,b,v_j,v_i\}\rangle\cong gem$, a contradiction.
Hence $[C_{i,j},C_{i,\ell}]=[C_{i,j},C_{j,\ell}]=\emptyset$.
Similarly, one can show that if $j\geq 4$, then for every $k\neq i$, $[C_{i,j}, C_{k,j}]=\emptyset$.

\item Let $C_{i,j}'\neq \emptyset$ and $a\in C'_{i,j}$.
Then there exists $b\in C_{i,j}'$ such that $ab\in E(G)$. 
Suppose for some $j<\ell$, there exists a vertex $c\in C_{i,\ell}\cup C_{j,\ell}$ then by (\ref{cijcil}), we have $\langle\{c,v_j,v_i,a,b\}\rangle\cong P_3\cup P_2$, a contradiction.
Hence $C_{i,\ell}=C_{j,\ell}=\emptyset$.
For $\ell< j$, if $C'_{i,\ell}\neq\emptyset$ then by using similar arguments, we can show that $C_{i,j}=\emptyset$, a contradiction.
Similarly, one can show that if $j\geq 4$, then $C_{k,j}=\emptyset$, for every $k\neq i$.

\end{enumerate}
\vskip-.92cm
\end{proof}

Now, let us establish $f=2\omega$ as a $\chi$-binding function for  $\{P_3\cup P_2, gem\}$-free graphs which is a better $\chi$-bound than the one given in Theorem \ref{gembound}.
\begin{theorem}\label{p2p3gem}
If $G$ is a $\{P_3\cup P_2, gem\}$-free graph, then $\chi(G)\leq2\omega(G)$.
\end{theorem}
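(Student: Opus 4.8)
The plan is to sharpen Proposition~\ref{gembound} by cutting $V(G)$ into just \emph{two} parts, each colourable with at most $\omega$ colours, so that $\chi(G)\le\chi(\langle V_1\rangle)+\chi(\langle V_2\rangle)\le 2\omega$. Recall from the proof of Proposition~\ref{gembound} that $v_1$ is complete to $S_1=\{v_2,\ldots,v_\omega\}\cup(\bigcup_{k\ge 2}I_k)\cup(\bigcup_{i\ge 2}C_{i,j})$. I would set $V_1=\{v_1\}\cup S_1$ and $V_2=V(G)\setminus V_1=I_1\cup(\bigcup_{j\ge 2}C_{1,j})$, which is exactly the set of non-neighbours of $v_1$ lying outside $A$. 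Since $v_1$ is a universal vertex of $\langle V_1\rangle$, any induced $P_4$ inside $\langle S_1\rangle$ would extend with $v_1$ to a $gem$; hence $\langle V_1\rangle$ is $P_4$-free, therefore perfect, and as $\omega(\langle V_1\rangle)\le\omega$ we get $\chi(\langle V_1\rangle)\le\omega$. The whole theorem then reduces to the single inequality $\chi(\langle V_2\rangle)\le\omega$.

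To handle $\langle V_2\rangle$ I would first strip away the easy pieces. By (\ref{GIp}) of Lemma~\ref{gemlemma1}, every vertex of $C_{1,j}$ is non-adjacent to $I_1$ (being a non-neighbour of $v_1$), so $I_1$ induces a union of independent components of $\langle V_2\rangle$ and is irrelevant to the chromatic number. By (\ref{cijcil}) of Lemma~\ref{gemlemma} we have $[C_{1,j},C_{1,\ell}]=\emptyset$ for all $j,\ell\ge 3$, and each $\langle C_{1,j}\rangle$ is a disjoint union of cliques by (\ref{CijClique}) of Fact~\ref{true}; hence $\langle\bigcup_{j\ge 3}C_{1,j}\rangle$ is itself a disjoint union of cliques, needing at most $\omega$ colours. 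Consequently everything comes down to bounding $\chi(H)$, where $H:=\langle C_{1,2}\cup(\bigcup_{j\ge 3}C_{1,j})\rangle$; that is, to controlling the edges between the ``special'' set $C_{1,2}$ and the clique family $\{C_{1,j}\}_{j\ge 3}$.

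The key simplification I would exploit is that the two sides of $H$ cannot both be thick. Indeed, if $C'_{1,j}\ne\emptyset$ for some $j\ge 3$, then applying (\ref{edgerow}) of Lemma~\ref{gemlemma} with $\ell=2<j$ forces $C'_{1,2}=\emptyset$, and the same item shows at most one index $j\ge 3$ can have $C'_{1,j}\ne\emptyset$. Thus I would split into two cases: either $C_{1,2}$ is independent, or every $C_{1,j}$ with $j\ge 3$ is independent while $C_{1,2}$ is a disjoint union of cliques. In both cases $H$ is the union of a disjoint-union-of-cliques graph with an independent set together with the cross edges, and $\omega(H)\le\omega$. The main obstacle is exactly to prove $\chi(H)\le\omega$: a naive greedy colouring fails, since one vertex on the independent side may meet several distinct clique components on the other side and so see up to $\omega$ colours. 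To overcome this I would analyse the cross edges directly through $\{P_3\cup P_2,gem\}$-freeness---each $b\in C_{1,j}$ ($j\ge 3$) is adjacent to $v_2$ whereas its $C_{1,2}$-neighbours are not, so a configuration consisting of $b$, two non-adjacent neighbours of $b$ in $C_{1,2}$, and the clique vertices $v_1,v_2$ is tightly constrained---and use this to show that $H$ is in fact perfect, equivalently that the cross edges are regular enough that a proper $\omega$-colouring of the clique side extends over the independent side. Establishing this structural property of $H$ is where I expect essentially all the difficulty of the theorem to lie.
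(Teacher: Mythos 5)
Your reduction hinges on the single inequality $\chi(\langle V_2\rangle)\le\omega$, where $V_2=I_1\cup(\bigcup_{j\ge 2}C_{1,j})$ is the set of non-neighbours of $v_1$ outside $A$, and this inequality is false. The Gr\"otzsch graph $\mu(C_5)$ (cited in the paper as a $\{P_3\cup P_2,diamond\}$-free, hence $\{P_3\cup P_2,gem\}$-free, graph with $\omega=2$ and $\chi=4$) is a counterexample: writing $w$ for the apex, $u_1,\dots,u_5$ for the independent middle layer and $c_1,\dots,c_5$ for the outer $5$-cycle, take $A=\{w,u_1\}$ with $v_1=w$. Then $I_1=\{c_2,c_5\}$, $C_{1,2}=\{c_1,c_3,c_4\}$, so $\langle V_2\rangle\cong C_5$ and $\chi(\langle V_2\rangle)=3>2=\omega$, while $\langle V_1\rangle\cong K_{1,5}$ already needs $2$ colours; your decomposition therefore only certifies $\chi\le 5$, not $\chi\le 2\omega=4$ (and other choices of $A$ behave no better). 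The step that conceals this is your appeal to (\ref{GIp}) of Lemma~\ref{gemlemma1} to conclude $[I_1,C_{1,2}]=\emptyset$: that lemma carries the standing hypothesis $j\ge 3$, so it says nothing about $C_{1,2}$, and in $\mu(C_5)$ the odd cycle inside $V_2$ is created precisely by edges between $I_1$ and $C_{1,2}$. Thus $I_1$ is not ``irrelevant to the chromatic number'' and the clean two-block split $\chi(G)\le\chi(\langle V_1\rangle)+\chi(\langle V_2\rangle)$ cannot yield $2\omega$.

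Even setting that aside, the step you yourself flag as carrying ``essentially all the difficulty'' --- that $H=\langle C_{1,2}\cup(\bigcup_{j\ge 3}C_{1,j})\rangle$ is perfect, equivalently $\omega$-colourable --- is only announced, never proved, so the proposal is incomplete on its own terms. The paper's argument is structured quite differently and sidesteps the false reduction: it keeps all $2\omega$ colours in play at once, reusing the colours $1,\dots,\omega$ already placed on $A\cup(\bigcup_k I_k)$ to colour the sets $C'_{i,j}$ with $j\ge 3$ through the defect sets $D_{i,j}=A\setminus N_A(C'_{i,j})$ (via (\ref{Gclique}) of Lemma~\ref{gemlemma} and (\ref{GIp}) of Lemma~\ref{gemlemma1}), reserving the colours $\omega+1,\dots,2\omega$ essentially for $C_{1,2}$, and finishing with an exchange argument (locating a vertex $z$ of a maximum clique of $\langle C_{1,2}\rangle$ with no neighbour among the vertices already coloured $\omega+1$) in the delicate case $\omega(\langle C_{1,2}\rangle)=\omega$. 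Any repair of your plan would have to let the colours used on $v_1$, $I_1$ and the rest of $A$ interact with those used on $C_{1,2}$, which leads back to an argument of that kind.
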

\begin{proof}
Let $G$ be a $\{P_3\cup P_2, gem\}$-free graph and $\{1,2,\ldots,2\omega\}$ be the set of colors.
For $1\leq i\leq \omega$, let us assign the color $i$ to the vertices of $(v_i\cup I_i)$.
Among the remaining vertices of $V(G)$, we first color the vertices in $(\mathop\cup\limits_{(i,j)\in L'}C'_{i,j})$, where $L'=L\backslash (1,2)$ followed by all the vertices in $\mathop\cup\limits_{(i,j)\in L'} (C_{i,j}\backslash C'_{i,j})$ and finally, we color the vertices in $C_{1,2}$.
Before proceeding further let us observe the following property.

\textbf{Claim 1}: For $j,\ell,r\geq 3$, if $C_{r,s}\neq \emptyset$ then $[\{C'_{1,j}\cup C'_{2,\ell}\},\{v_r,v_s,C_{r,s}\}]$ is complete.

Let $a\in C_{r,s}$.
If $C'_{1,j}\cup C'_{2,\ell}=\emptyset$, then there is nothing to prove.
Hence without loss of generality, let us assume $C'_{1,j}\neq \emptyset$ and $b,c\in C'_{1,j}$ such that $bc\in E(G)$.
We begin by showing that $[C'_{1,j},\{v_r,v_s\}]$ is complete.
On the contrary, let us assume that $bv_r\notin E(G)$.
By (\ref{Gadjacency}) of Lemma \ref{gemlemma}, $[C'_{1,j},v_r]=\emptyset$.
If $ab\in E(G)$, then by (\ref{joinvertex}) of Fact \ref{true}, $\langle\{v_2,b,a,v_1,v_r\}\rangle\cong gem$, a contradiction.
Hence $ab\notin E(G)$ and for the same reason we see that $[a,C'_{1,j}]=\emptyset$ and thus $\langle\{a,v_1,v_r,b,c\}\rangle\cong P_3\cup P_2$, again a contradiction.
Hence $[C'_{1,j},v_r]$ is complete.
Similarly, one can show that $[\{C'_{1.j}\cup C'_{2,\ell}\},\{v_r,v_s\}]$ is complete.
Next, we shall show that $[\{C'_{1.j}\cup C'_{2,\ell}\}, C_{r,s}]$ is complete.
Suppose there exists a vertex $b\in C'_{1,j}$ such that $ab\notin E(G)$, then by (\ref{joinvertex}) of Fact \ref{true} and the above argument, $\langle\{v_2,a,v_1,v_r,b\}\rangle\cong gem$, a contradiction.
Hence $[C'_{1,j}, C_{r,s}]$ is complete and similarly, one can show that $[C'_{2,\ell},C_{r,s}]$ is complete.

To color the vertices in $(\mathop\cup\limits_{(i,j)\in L} C_{i,j})$, we begin by considering $\omega(G)\leq 2$.
One can see that if $\omega(G)\leq 2$, then $(\mathop\cup\limits_{(i,j)\in L} C_{i,j})\subseteq C_{1,2}$ and hence by (\ref{CijClique}) of Fact \ref{true}, the vertices of $C_{1,2}$ can be colored with colors $\{\omega+1,\ldots, 2\omega\}$ and hence $G$ will be $2\omega$-colorable.
Next, let us consider $\omega(G)\geq 3$.
To color the vertices of $(\mathop\cup\limits_{(i,j)\in L'} C_{i,j})$, we break our proof into two cases.

\noindent \textbf{Case 1}. $C_{r,s}$ is non-empty, for some $r\geq 3$.

By (\ref{GIp}) of Lemma \ref{gemlemma1} and (\ref{Gclique}) of Lemma \ref{gemlemma}, one can see that we can properly color each component of $\langle C'_{i,j}\rangle$, $j\geq3$, by using the colors given to the vertices in $D_{i,j}$.
We shall show that this coloring is proper.
Suppose for $j,\ell\geq 3$, there exist two adjacent vertices $a\in C'_{i,j}$ and $b\in C'_{k,\ell}$ such that they received the same color, say $q$ then $av_q, bv_q\notin E(G)$.
Without loss of generality, let us assume $(i,j)<_L(k,\ell)$.
By (\ref{edgerow}) of Lemma \ref{gemlemma}, $i<k$ and thus $bv_i\in E(G)$.
Let $t=\{1,2\}\backslash\{i\}$. If $k\geq 3$, then $\langle\{v_t,a,b,v_i,v_q\}\rangle\cong gem$, a contradiction.
If $k<3$, then the only possible values of $i$ and $k$ are $1$ and $2$ respectively.
Thus by Claim 1, $\langle\{v_r,a,b,v_1,v_q\}\rangle\cong gem$, again a contradiction.
Hence the coloring mentioned above is proper.

Next, let us color the vertices in $(\mathop\cup\limits_{(i,j)\in L'} C_{i,j}\backslash C'_{i,j})$.
For every $(i,j)\in L'$, let us assign the color $i$ to all the vertices in $(C_{i,j}\backslash C_{i,j}')$.
We claim that this extension of coloring is also proper.
Suppose the coloring is improper, then there exist a pair of adjacent vertices $a$ and $b$ which receive the same color $i$, where $a\in C_{i,j}\backslash C_{i,j}'$ and $b\in C_{k,\ell}$ for some $j,\ell\geq 3$.
By (\ref{cijcil}) and (\ref{edgerow}) of Lemma \ref{gemlemma} and our coloring technique, one can observe that $k<i$, $b\in C_{k,l}'$ and $bv_i\notin E(G)$.
Next, we shall show that $i>2$.
Suppose $i\leq2$, then $k=1$ and $i=2$ is the only possibility and since $\ell\geq 3$, $[C_{1,\ell},v_2]$ is complete and hence $bv_2\in E(G)$, a contradiction.
Thus $i>2$.
Let $s=\{1,2\}\backslash\{k\}$. 
Since $k<i$, $bv_i\notin E(G)$ and $i\geq 3$, $\langle\{v_s,b,a,v_k,v_i\}\rangle\cong gem$, a contradiction.
Hence if $C_{r,s}\neq \emptyset$, for some $r\geq 3$, then the vertices in $V(G)\backslash C_{1,2}$ can be properly colored with $\{1,2,\ldots,\omega\}$ colors.

\noindent \textbf{Case 2}. $C_{r,s}=\emptyset$, for every $r\geq 3$.

Since $C_{r,s}=\emptyset$, for every $r\geq 3$, $(\mathop\cup\limits_{(i,j)\in L} C_{i,j})\backslash C_{1,2}= (\mathop\cup\limits_{p=3}^{\omega}C_{1,p})\cup (\mathop\cup\limits_{q=3}^{\omega}C_{2,q})$.
We begin by coloring the vertices in $(\mathop\cup\limits_{p=3}^{\omega}C'_{1,p})\cup (\mathop\cup\limits_{q=3}^{\omega}C'_{2,q})$.
By (\ref{edgerow}) of Lemma \ref{gemlemma}, there exist unique $j,\ell\geq 3$, such that $\mathop\cup\limits_{p=3}^{\omega}C'_{1,p}= C'_{1,j}$ and $\mathop\cup\limits_{q=3}^{\omega}C'_{2,q}= C'_{2,\ell}$.
If $C_{1,j}'\cup C_{2,\ell}'=\emptyset$, then there is nothing to prove.
If $C'_{1,j}\neq \emptyset$ and $C'_{2,\ell}=\emptyset$, then by (\ref{GIp}) of Lemma \ref{gemlemma1} and (\ref{Gadjacency}) and (\ref{Gclique}) of Lemma \ref{gemlemma}, we can assign the colors given to the vertices of $D_{1,j}$ to the vertices of $C'_{1,j}$.
We can use a similar technique when $C'_{1,j}=\emptyset$ and $C'_{2,\ell}\neq \emptyset$.
Next, let us assume that both $C'_{1,j}$ and $C'_{2,\ell}$ are non-empty.
If ${D_{1,j}\cap D_{2,\ell}}=\emptyset$, we can do the same way by assigning the colors given to the vertices of $D_{1,j}$ and $D_{2,\ell}$ to the vertices of $C_{1,j}'$ and $C_{2,\ell}'$ respectively.
Hence let us assume that ${D_{1,j}\cap D_{2,\ell}}\neq\emptyset$.
In this case, we begin by showing that $N_A (C_{1,j}') \cap N_A (C_{2,\ell}')= \emptyset$.
Suppose there exist vertices $a\in C_{1,j}'$, $b\in C_{2,\ell}'$ such that  $N_A (a)\cap N_A (b)\neq \emptyset$.
Let $v_x\in N_A (a) \cap N_A (b)$ and $v_y\in {D_{1,j}\cap D_{2,\ell}}$.
If $ab\notin E(G)$, then $\langle \{v_x,a,v_2,v_1,b\}\rangle\cong gem$, a contradiction and if $ab\in E(G)$, then $\langle \{v_x,a,b,v_1,v_y\}\rangle\cong gem$, again a contradiction.
Hence if ${D_{1,j}\cap D_{2,\ell}}\neq \emptyset$, then $N_A (C_{1,j}') \cap N_A (C_{2,\ell}')= \emptyset$.
Let us further divide the proof into two cases based on $|{D_{1,j}\cap D_{2,\ell}}|$.

\noindent \textbf{Case 2.1}. $|{D_{1,j}\cap D_{2,\ell}}|\geq 2$.

Let $v_x,v_y\in D_{1,j}\cap D_{2,\ell}$.
We begin by showing that $[C_{1,j}',C_{2,\ell}']$ is complete.
Suppose there exist vertices $a\in C_{1,j}'$ and $b\in C_{2,\ell}'$ such that $ab\notin E(G)$.
Let $c$ be an another vertex in $C_{1,j}'$ such that $ac\in E(G)$.
If $bc\notin E(G)$, $\langle \{b,v_1,v_x,a,c\}\rangle\cong P_3\cup P_2$, a contradiction and if $bc\in E(G)$, $\langle \{a,c,b,v_x,v_y\}\rangle\cong P_3\cup P_2$, again a contradiction.
Hence $[C_{1,j}',C_{2,\ell}']$ is complete.
By (\ref{CijClique}) of Fact \ref{true}, we know that each $\langle C_{i,j}\rangle$ is a union of cliques.
Let $S\subseteq C_{1,j}'$ and $T\subseteq C_{2,\ell}'$ such that $|S|=\omega(\langle C_{1,j}'\rangle)$ and $|T|=\omega(\langle C_{2,\ell}'\rangle)$ respectively.
Clearly, $|S\cup T|\leq \omega(G)=|A|$.
Since $N_A (C_{1,j}') \cap N_A (C_{2,\ell}')= \emptyset$, we have $|N_A (S)|+| N_A (T)|+|(D_{1,j}\cap D_{2,\ell})|=|A|\geq |S\cup T|$ and by (\ref{Gclique}) of Lemma \ref{gemlemma}, we have $|S|\leq  |D_{1,j}|=| N_A (T)|+|(D_{1,j}\cap D_{2,\ell})|$ and $|T|\leq  |D_{2,\ell}|=|N_A (S)|+|(D_{1,j}\cap D_{2,\ell})|$.
Let us start coloring the vertices of $S$ and $T$ with the colors given to the vertices of $|N_A(T)|$ and $|N_A(S)|$  respectively.
If there are uncolored vertices in $S\cup T$, then color those vertices with the colors given to the vertices in $D_{1,j}\cap D_{2,\ell}$.
By (\ref{GIp}) of Lemma \ref{gemlemma1} and the above inequalities, it is easy to observe that the given coloring is proper.
By (\ref{GIp}) of Lemma \ref{gemlemma1} and (\ref{Gadjacency}) of Lemma \ref{gemlemma}, the remaining components in $\langle C_{1,j}'\rangle$ and $\langle C_{2,\ell}'\rangle$ can be colored  by using the colors assigned to the vertices of $S$ and $T$ respectively.
Clearly, this is a proper coloring.

\noindent \textbf{Case 2.2}. $|{D_{1,j}\cap D_{2,\ell}}|=1$.

In this case we assign the colors given to the vertices of $D_{2,\ell}$ and $D_{1,j}\backslash D_{2,\ell}$ together with a new color $\omega+1$ to the vertices of $C_{2,\ell}'$ and $C_{1,j}'$ respectively.

By using (\ref{GIp}) of Lemma \ref{gemlemma1} and (\ref{cijcil}) of Lemma \ref{gemlemma} and (\ref{joinvertex}) of Fact \ref{true}, we can assign the colors $1$ and $2$ to the vertices in $(\mathop\cup\limits_{p=3}^{\omega}C_{1,p})\backslash C_{1,j}'$ and $(\mathop\cup\limits_{q=3}^{\omega}C_{2,q})\backslash C_{2,\ell}'$ respectively.
Clearly, this coloring is proper.

Finally, let us color the vertices of $C_{1,2}$.
If $\omega(\langle C_{1,2}\rangle)\leq\omega(G)-1$, then by (\ref{CijClique}) of Fact \ref{true}, we can color the vertices of $C_{1,2}$ with colors $\{\omega+2,\ldots,2\omega\}$ and thus $\chi(G)\leq2\omega(G)$.
Hence let us consider the case when $\omega(\langle C_{1,2}\rangle)=\omega(G)$.
Let $S\subseteq C_{1,2}$ such that $\omega(\langle S\rangle)=\omega(\langle C_{1,2}\rangle)=\omega(G)$.
If none of the vertices in $ C_{1,j}'$ have been assigned the color $\omega(G)+1$, then the vertices of $C_{1,2}$ can be colored with colors $\{\omega+1,\ldots,2\omega\}$ and hence $G$ is $2\omega$-colorable.
Next, let us assume that there are components in $\langle C_{1,j}'\rangle$ that have been assigned the color $\omega(G)+1$.
Say $H_1,\ldots,H_t$, $t\geq1$.
Clearly, for a vertex $a\in C_{1,j}$, $|[a,S]|\leq |S|-1=\omega(G)-1$.
Also, it is not difficult to see that $|[a,S]|\geq |S|-1$, otherwise there would exist vertices $x,y\in S$ such that $ax,ay\notin E(G)$ and $\langle \{a,v_2,v_1,x,y\}\rangle\cong P_3\cup P_2$, a contradiction.
Thus $|[a,S]|= |S|-1=\omega(G)-1$.
Let $u_1,\ldots,u_t$ be vertices in components $H_1,\ldots,H_t$ respectively that have been assigned the color $\omega(G)+1$.
We claim that there exists a vertex $z\in S$ such that $[z,\{u_1,\ldots,u_t\}]=\emptyset$.
The claim holds when $t=1$.
For $t\geq2$ and any $z\in S$, suppose that $[z,\{u_1,\ldots,u_t\}]\neq\emptyset$, then there exist vertices $u_a\in V(H_a),u_b\in V(H_b),x,y\in S$ such that $x\neq y$ and $u_ax,u_by\notin E(G)$.
Since $\omega(G)\geq3$, there exist a vertex $w\in S\backslash\{x,y\}$ such that $wu_a,wu_b\in E(G)$ (since, the vertices $u_a,u_b$ can be non-adjacent to exactly one vertex each in $S$) and thereby $\langle \{w,u_a,y,x,u_b\}\rangle\cong K_1+ P_4$, a contradiction.
Thus there exists a vertex $z\in S$ such that $[z,\{u_1,\ldots,u_t\}]=\emptyset$.
Now, color the vertex $z$ with the color $\omega(G)+1$ and the remaining vertices of $S$ with colors $\{\omega(G)+2,\ldots, 2\omega(G)\}$.
Similarly, we can color all the other components having size $\omega(G)$ in $\langle C_{1,2}\rangle$ and the remaining components can be properly colored with colors $\{\omega+2,\omega+3,\ldots, 2\omega\}$.
Hence $\chi(G)\leq2\omega(G)$.
\end{proof}
\section{Conclusion}

In this paper we have shown that every $\{P_3\cup P_2,gem\}$-free graph is $2\omega$-colorable.
A natural question is whether this bound is tight.
With the help of some examples we shall show that the $\chi$-binding function $f=2\omega$ is tight for $\omega=2\textnormal{ and }3$, the class of $\{P_3\cup P_2,gem\}$-free graphs cannot admit a special linear $\chi$-binding function and that $f^*\geq \left\lceil\frac{5\omega(G)}{4}\right\rceil$.
However, the question of whether the bound is tight for $\omega\geq 4$ is still open.

In \cite{bharathi2018colouring}, it has already been established that the Gr\"{o}tzsch graph $\mu(C_5)$ is an example of a $\{P_3\cup P_2, diamond\}$-free graph with $\omega(\mu(C_5))=2$ and $\chi(\mu(C_5))=4$.
In addition, it was observed in \cite{skarthick2018chromatic}, that the complement of Schl\"{a}fli graph $G$ (see, \textcolor{blue}{www.distanceregular.org/graphs/complement-schlafli.html}) is an example of a $\{P_3\cup P_2, diamond\}$-free graph with $\omega(G)=3$ and $\chi(G)=6$.
Since every $\{P_3\cup P_2, diamond\}$-free graph is also $\{P_3\cup P_2, gem\}$-free, the bound given in Theorem \ref{p2p3gem} is optimal for $\omega=2\textnormal{ and }3$ 

Next, let us recall the  definition of complete expansion of a graph $G$.
Let $G$ be a graph on $n$-vertices $\{v_1,v_2,\ldots,v_n\}$ and let $H_1,H_2,\ldots,H_n$ be $n$ vertex-disjoint graphs.
An expansion $G(H_1,H_2,\ldots,H_n)$ of $G$ is a graph obtained from $G$ by 

\begin{enumerate}[(i)]
\setlength\itemsep{-1pt}
\item replacing each $v_i$ of $G$ by $H_i$, $i=1,2,\ldots,n$ and
\item by joining every vertex in $H_i$ with every vertex in $H_j$, whenever $v_i$ and $v_j$ are adjacent in $G$.
\end{enumerate}

For $i\in \{1,2,\ldots,n\}$, if $H_i\cong K_{m_i}$, then $G(H_1,H_2,\ldots,H_n)$ is said to be a complete expansion of $G$ and is denoted by $\mathbb{K}[G](m,m,m,m,m)$ or $\mathbb{K}[G]$.
Let us consider the graph $G\cong \mathbb{K}[C_5](m,m,m,m,m)$, where $\omega(G)=2m$, $m\geq 1$.
Clearly, $\alpha(G)=2$ and thus $G$ is $(P_3\cup P_2)$-free and any path on four vertices cannot have a common neighbor, thus $G$ is $gem$-free. In \cite{fouquet1995graphs}, it was shown that $\chi(G)=\left\lceil\frac{5\omega(G)}{4}\right\rceil$. Hence the class of $\{P_3\cup P_2,gem\}$-free graphs cannot have a special linear $\chi$-binding function.

\bibliographystyle{ams}
\bibliography{ref}
\end{titlepage}
\end{document}